\newtheorem{theorem}{Theorem}
\newtheorem{definition}[theorem]{Definition}
\newtheorem{lemma}[theorem]{Lemma}
\newtheorem{proposition}[theorem]{Proposition}
\newtheorem{remark}[theorem]{Remark}
\newenvironment{proof}[1][Proof]{\noindent\textbf{#1.} }{\ \rule{0.5em}{0.5em}}
\numberwithin{theorem}{section}
\numberwithin{equation}{section}
\begin{document}

\title{On Cartan Spaces with the $m$-th Root Metric $K(x,p)=\sqrt[m]{%
a^{i_{1}i_{2}...i_{m}}(x)p_{i_{1}}p_{i_{2}}...p_{i_{m}}}$}
\author{Gheorghe Atanasiu and Mircea Neagu \and {\small January 2008;
Revised September 2008 (minor revisions)}}
\date{}
\maketitle

\begin{abstract}
The aim of this paper is to expose some geometrical properties of the
locally Minkowski-Cartan space with the Berwald-Mo\'{o}r metric of momenta $%
L(p)=\sqrt[n]{p_{1}p_{2}...p_{n}}$. This space is regarded as a particular
case of the $m$-th root Cartan space. Thus, Section 2 studies the $v-$%
covariant derivation components of the $m$-th root Cartan space. Section 3
computes the $v-$curvature d-tensor $S^{hijk}$ of the $m$-th root Cartan
space and studies conditions for $S3-$likeness. Section 4 computes the $T-$%
tensor $T^{hijk}$ of the $m$-th root Cartan space. Section 5 particularizes
the preceding geometrical results for the Berwald-Mo\'{o}r metric of momenta.
\end{abstract}

\textbf{Mathematics Subject Classification (2000):} 53B40, 53C60, 53C07.

\textbf{Key words and phrases:} $m$-th root Cartan space, $S3-$likeness, $T-$%
tensor, Berwald-Mo\'{o}r metric of momenta.

\section{Introduction}

\hspace{4mm} Owing to the studies of E. Cartan, R. Miron [6], [7], Gh.
Atanasiu [2] and many others, the geometry of Cartan spaces is today an
important chapter of differential geometry, regarded as a particular case of
the Hamilton geometry. By the Legendre duality of the Cartan spaces with the
Finsler spaces studied by R. Miron, D. Hrimiuc, H. Shimada and S. V. Sab\u{a}%
u [8], it was shown that the theory of Cartan spaces has the same symmetry
like the Finsler geometry, giving in this way a geometrical framework for
the Hamiltonian theory of Mechanics or Physical fields. In a such
geometrical context we recall that a \textit{Cartan space} is a pair $%
\mathcal{C}^{n}=\left( M^{n},K(x,p)\right) $ such that the following axioms
hold good:

\begin{enumerate}
\item $K$ is a real positive function on the cotangent bundle $T^{\ast }M$,
differentiable on $T^{\ast }M\backslash \{0\}$ and continuous on the null
section of the canonical projection 
\begin{equation*}
\pi ^{\ast }:T^{\ast }M\rightarrow M;
\end{equation*}

\item $K$ is positively $1$-homogenous with respect to the momenta $p_{i}$;

\item The Hessian of $K^{2}$, with the elements%
\begin{equation*}
g^{ij}(x,p)=\frac{1}{2}\frac{\partial ^{2}K^{2}}{\partial p_{i}\partial p_{j}%
},
\end{equation*}%
is positive-defined on $T^{\ast }M\backslash \{0\}$.
\end{enumerate}

On the other hand, in the last two decades, physical studies due to G. S.
Asanov [1], D. G. Pavlov [9], [10] and their co-workers emphasize the
important role played by the Berwald-Mo\'{o}r metric $L:TM\rightarrow 
\mathbb{R}$,%
\begin{equation*}
L(y)=\left( y^{1}y^{2}...y^{n}\right) ^{\frac{1}{n}},
\end{equation*}%
in the theory of space-time structure and gravitation as well as in unified
gauge field theories.

For such geometrical-physics reasons, following the geometrical ideas
exposed by M. Matsumoto and H. Shimada in [4], [5] and [11] or by ourselves
in [3], in this paper we investigate some geometrical properties of the $m$%
\textit{-th root Cartan space} which is a natural generalization of the
locally Minkowski-Cartan space with the Berwald-Mo\'{o}r metric of momenta.

\section{The $m$-th root metric and $v-$derivation components}

\hspace{4mm} Let $\mathcal{C}^{n}=\left( M^{n},K(x,p)\right) $, $n\geq 4$,
be an $n$-dimensional Cartan space with the metric%
\begin{equation}
K(x,p)=\sqrt[m]{a^{i_{1}i_{2}...i_{m}}(x)p_{i_{1}}p_{i_{2}}...p_{i_{m}}},
\label{m-root-metric}
\end{equation}%
where $a^{i_{1}i_{2}...i_{m}}(x)$, depending on the position alone, is
symmetric in all the indices $i_{1},i_{2},...,i_{m}$ and $m\geq 3$.

\begin{definition}
The Cartan space with the metric (\ref{m-root-metric}) is called the $m$%
\textbf{-th root Cartan space}.
\end{definition}

Let us consider the following notations:%
\begin{equation}
\begin{array}{l}
a^{i}=\left[ a^{ii_{2}i_{3}...i_{m}}(x)p_{i_{2}}p_{i_{3}}...p_{i_{m}}\right]
/K^{m-1},\medskip \\ 
a^{ij}=\left[ a^{iji_{3}i_{4}...i_{m}}(x)p_{i_{3}}p_{i_{4}}...p_{i_{m}}%
\right] /K^{m-2},\medskip \\ 
a^{ijk}=\left[ a^{ijki_{4}i_{5}...i_{m}}(x)p_{i_{4}}p_{i_{5}}...p_{i_{m}}%
\right] /K^{m-3}.%
\end{array}
\label{notatii-auri}
\end{equation}

The normalized supporting element%
\begin{equation*}
l^{i}=\dot{\partial}^{i}K,\text{ where }\dot{\partial}^{i}=\frac{\partial }{%
\partial p_{i}},
\end{equation*}%
the fundamental metrical d-tensor%
\begin{equation*}
g^{ij}=\frac{1}{2}\dot{\partial}^{i}\dot{\partial}^{j}K^{2}
\end{equation*}%
and the angular metrical d-tensor 
\begin{equation*}
h^{ij}=K\dot{\partial}^{i}\dot{\partial}^{j}K
\end{equation*}%
are given by%
\begin{equation}
\begin{array}{l}
l^{i}=a^{i},\medskip \\ 
g^{ij}=(m-1)a^{ij}-(m-2)a^{i}a^{j},\medskip \\ 
h^{ij}=(m-1)(a^{ij}-a^{i}a^{j}).%
\end{array}
\label{lgh-formulas}
\end{equation}

\begin{remark}
From the positively $1$-homogeneity of the $m$-th root Cartan metrical
function (\ref{m-root-metric}) it follows that we have%
\begin{equation*}
K^{2}(x,p)=g^{ij}(x,p)p_{i}p_{j}=a^{ij}(x,p)p_{i}p_{j}.
\end{equation*}
\end{remark}

Let us suppose now that the d-tensor $a^{ij}$ is regular, that is there
exists the inverse matrix $(a^{ij})^{-1}=(a_{ij})$. Obviously, we have%
\begin{equation*}
a_{i}\cdot a^{i}=1,
\end{equation*}%
where%
\begin{equation*}
a_{i}=a_{is}a^{s}=\frac{p_{i}}{K}.
\end{equation*}%
Under these assumptions, we obtain the inverse components $g_{ij}(x,p)$ of
the fundamental metrical d-tensor $g^{ij}(x,p)$, which are given by%
\begin{equation}
g_{ij}=\frac{1}{m-1}a_{ij}+\frac{m-2}{m-1}a_{i}a_{j}.  \label{inverse(g)}
\end{equation}

The relations (\ref{notatii-auri}) and (\ref{lgh-formulas}) imply that the
components of the $v-$torsion d-tensor%
\begin{equation*}
C^{ijk}=-\frac{1}{2}\dot{\partial}^{k}g^{ij}
\end{equation*}%
are given in the form%
\begin{equation}
C^{ijk}=-\frac{(m-1)(m-2)}{2K}\left(
a^{ijk}-a^{ij}a^{k}-a^{jk}a^{i}-a^{ki}a^{j}+2a^{i}a^{j}a^{k}\right) .
\label{v-torsion(C^ijk)}
\end{equation}%
Consequently, using the relations (\ref{inverse(g)}) and (\ref%
{v-torsion(C^ijk)}), together with the formula 
\begin{equation*}
a_{s}a^{sjk}=a^{jk},
\end{equation*}%
we find the components of the $v-$derivation%
\begin{equation*}
C_{i}^{jk}=g_{is}C^{sjk}
\end{equation*}%
in the following form:%
\begin{equation}
C_{i}^{jk}=-\frac{(m-2)}{2K}\left[ a_{i}^{jk}-\left( \delta
_{i}^{j}a^{k}+\delta _{i}^{k}a^{j}\right) +a_{i}(2a^{j}a^{k}-a^{jk})\right] ,
\label{v-derivation(C_i^jk)}
\end{equation}%
where%
\begin{equation*}
a_{i}^{jk}=a_{is}a^{sjk}.
\end{equation*}%
From (\ref{v-derivation(C_i^jk)}) we easily find the following geometrical
result:

\begin{proposition}
The \textbf{torsion covector}%
\begin{equation*}
C^{i}=C_{r}^{ir}
\end{equation*}%
is given by the formula%
\begin{equation*}
C^{i}=-\frac{(m-2)}{2K}\left( a_{r}^{ir}-na^{i}\right) ,
\end{equation*}%
where $n=\dim M$.
\end{proposition}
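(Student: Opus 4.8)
The plan is to obtain $C^{i}$ by a direct contraction of the $v$-derivation components (\ref{v-derivation(C_i^jk)}). Concretely, in the expression for $C_{i}^{jk}$ I would set $j=i$ and $k=r$, keep the lower index equal to $r$, and then sum over the repeated index $r$, as prescribed by the definition $C^{i}=C_{r}^{ir}$. This produces
\begin{equation*}
C^{i}=-\frac{(m-2)}{2K}\Bigl[a_{r}^{ir}-\bigl(\delta_{r}^{i}a^{r}+\delta_{r}^{r}a^{i}\bigr)+a_{r}\bigl(2a^{i}a^{r}-a^{ir}\bigr)\Bigr],
\end{equation*}
and the whole task reduces to simplifying the bracket term by term.

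First I would treat the two Kronecker contractions separately, since this is exactly where a counting error could creep in: the contraction $\delta_{r}^{i}a^{r}$ leaves only the summand $r=i$ and hence equals $a^{i}$, whereas $\delta_{r}^{r}a^{i}$ contracts the identity tensor and therefore contributes the trace $\delta_{r}^{r}=n$, giving $na^{i}$. Thus this middle group collapses to $-(n+1)a^{i}$.

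Next I would simplify the cubic group $a_{r}(2a^{i}a^{r}-a^{ir})$. Here I would invoke the normalization $a_{r}a^{r}=1$ recorded earlier, so that $2a^{i}(a_{r}a^{r})=2a^{i}$. For the remaining piece $a_{r}a^{ir}$ I would use the symmetry of $a^{ir}$ together with the contraction identity $a_{s}a^{si}=a^{i}$ --- the rank-one analogue of the relation $a_{s}a^{sjk}=a^{jk}$ already exploited in deriving (\ref{v-derivation(C_i^jk)}) and established in the same way from the definitions (\ref{notatii-auri}). This yields $a_{r}a^{ir}=a^{i}$, so the cubic group equals $2a^{i}-a^{i}=a^{i}$.

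Finally I would collect the three contributions inside the bracket, $a_{r}^{ir}-(n+1)a^{i}+a^{i}=a_{r}^{ir}-na^{i}$, which gives the announced formula. I do not expect any genuine obstacle; the computation is purely algebraic. The one point demanding care --- and the only place I would double-check --- is the bookkeeping of the two distinct delta-contractions, namely distinguishing the single surviving term in $\delta_{r}^{i}a^{r}$ from the full trace $\delta_{r}^{r}=n$, together with remembering the symmetry of $a^{ir}$ when reducing $a_{r}a^{ir}$.
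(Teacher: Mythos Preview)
Your proposal is correct and matches the paper's approach: the paper simply states that the result follows ``easily'' from (\ref{v-derivation(C_i^jk)}) by contraction, and your write-up supplies exactly those details, including the needed identities $a_{r}a^{r}=1$ and $a_{r}a^{ir}=a^{i}$.
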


\section{The $v-$curvature d-tensor $S^{hijk}$}

\hspace{4mm} Taking into account the relations (\ref{v-torsion(C^ijk)}) and (%
\ref{v-derivation(C_i^jk)}), by calculation, we obtain

\begin{theorem}
The $v-$\textbf{curvature} d-tensor%
\begin{equation*}
S^{hijk}=C_{r}^{ij}C^{rhk}-C_{r}^{ik}C^{rhj}
\end{equation*}%
can be written in the form%
\begin{equation*}
S^{hijk}=\frac{(m-1)(m-2)^{2}}{4K^{2}}\mathcal{A}_{\{j,k\}}%
\{a_{r}^{ij}a^{rhk}-a^{ij}(a^{hk}-a^{h}a^{k})+a^{i}a^{j}a^{hk}\},
\end{equation*}%
where $\mathcal{A}_{\{j,k\}}$ means an alternate sum.
\end{theorem}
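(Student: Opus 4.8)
The plan is to substitute the explicit formula for the $v$-derivation component $C_i^{jk}$ from (\ref{v-derivation(C_i^jk)}) directly into the defining expression $S^{hijk}=C_r^{ij}C^{rhk}-C_r^{ik}C^{rhj}$ and simplify. First I would record that, by (\ref{v-torsion(C^ijk)}) and (\ref{v-derivation(C_i^jk)}), both $C_r^{ij}$ and $C^{rhk}$ carry an overall scalar prefactor, so each product contributes $\tfrac{(m-1)(m-2)^2}{4K^2}$ once the two prefactors $-\tfrac{(m-2)}{2K}$ and $-\tfrac{(m-1)(m-2)}{2K}$ are multiplied together. Pulling this common factor out in front reduces the task to simplifying the bracketed tensor expressions, and the target coefficient $\tfrac{(m-1)(m-2)^2}{4K^2}$ confirms this bookkeeping before any index manipulation begins.

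Next I would expand the contraction $C_r^{ij}C^{rhk}$ term by term. Writing $C_r^{ij}$ as a bracket in $a_r^{ij}$, $\delta_r^{i}a^{j}$, $\delta_r^{j}a^{i}$, $a_r(2a^{i}a^{j}-a^{ij})$, and $C^{rhk}$ as the bracket $a^{rhk}-a^{rh}a^{k}-a^{hk}a^{r}-a^{kr}a^{h}+2a^{r}a^{h}a^{k}$, I would multiply these out and contract on the summation index $r$. The key algebraic tools here are the contraction identities already available in the excerpt: $a_i a^{i}=1$, $a_s a^{sjk}=a^{jk}$, and $a_i^{jk}=a_{is}a^{sjk}$, together with the Kronecker deltas collapsing $\delta_r^{i}$ and $\delta_r^{j}$. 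I expect many of the resulting scalar-weighted terms (those proportional to $a^{i}a^{j}\cdots$) to be symmetric under the interchange $j\leftrightarrow k$ and therefore to cancel when the alternate sum $\mathcal{A}_{\{j,k\}}$ is applied; only the genuinely $\{j,k\}$-antisymmetric combinations survive.

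The main obstacle will be the disciplined bookkeeping of the antisymmetrization rather than any single hard identity. Because $S^{hijk}$ is the difference of a term and its $j\leftrightarrow k$ swap, every contribution that is symmetric in $j$ and $k$ must drop out, and the challenge is to verify systematically which of the roughly two dozen product terms are symmetric (and hence killed by $\mathcal{A}_{\{j,k\}}$) and which are not. In particular the terms involving $a_r^{ij}a^{rhk}$, $a^{ij}a^{hk}$, $a^{ij}a^{h}a^{k}$, and $a^{i}a^{j}a^{hk}$ must be isolated as the surviving antisymmetric pieces, matching the three groups inside the braces of the claimed formula. I would carry out the symmetric-part cancellation first to discard the bulk of the terms, then collect the remaining coefficients to confirm they reduce to exactly $\mathcal{A}_{\{j,k\}}\{a_r^{ij}a^{rhk}-a^{ij}(a^{hk}-a^{h}a^{k})+a^{i}a^{j}a^{hk}\}$ with the prefactor established in the first step.
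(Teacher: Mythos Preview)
Your proposal is correct and is exactly the approach the paper takes: the paper simply states that the result follows ``by calculation'' from the formulas (\ref{v-torsion(C^ijk)}) and (\ref{v-derivation(C_i^jk)}), and your outline supplies precisely those computational details, including the correct prefactor bookkeeping and the use of the contraction identities $a_i a^i=1$ and $a_s a^{sjk}=a^{jk}$ to simplify after antisymmetrizing in $j,k$.
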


\begin{remark}
Using the relations (\ref{lgh-formulas}), we underline that the $v-$%
curvature d-tensor $S^{hijk}$ can be written as%
\begin{equation}
K^{2}S^{hijk}=\frac{(m-2)^{2}}{4}\left[ \left(
h^{hj}h^{ik}-h^{hk}h^{ij}\right) /(m-1)+(m-1)U^{hijk}\right] ,
\label{formula((S)si(h))}
\end{equation}%
where%
\begin{equation}
U^{hijk}=a_{r}^{ij}a^{rhk}-a_{r}^{ik}a^{rhj}.  \label{tensor(U^hijk)}
\end{equation}
\end{remark}

In the sequel, let us recall the following important geometrical concept [4]:

\begin{definition}
A Cartan space $\mathcal{C}^{n}=\left( M^{n},K(x,p)\right) $, $n\geq 4$, is
called $S3$\textbf{-like} if there exists a positively $0$-homogenous scalar
function $S=S(x,p)$ such that the $v-$curvature d-tensor $S^{hijk}$ to have
the form%
\begin{equation}
K^{2}S^{hijk}=S\left\{ h^{hj}h^{ik}-h^{hk}h^{ij}\right\} .  \label{S3-like}
\end{equation}
\end{definition}

Let $\mathcal{C}^{n}=\left( M^{n},K(x,p)\right) $, $n\geq 4$, be the $m$-th
root Cartan space. As an immediate consequence of the above definition we
have the following important result:

\begin{theorem}
The $m$-th root Cartan space $\mathcal{C}^{n}$ is an $S3$-like Cartan space
if and only if the d-tensor $U^{hijk}$ is of the form%
\begin{equation}
U^{hijk}=\lambda \left\{ h^{hj}h^{ik}-h^{hk}h^{ij}\right\} ,
\label{S3-like-condition}
\end{equation}%
where $\lambda =\lambda (x,p)$ is a positively $0$-homogenous scalar
function.
\end{theorem}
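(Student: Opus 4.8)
The plan is to derive the statement directly from the identity (\ref{formula((S)si(h))}) established in the preceding Remark, read together with the definition (\ref{S3-like}) of $S3$-likeness. Both (\ref{formula((S)si(h))}) and (\ref{S3-like}) are expressions for the single quantity $K^{2}S^{hijk}$, so the argument reduces to equating the two and solving a linear relation for $U^{hijk}$; no further geometric input is required. For brevity I would write $P^{hijk}=h^{hj}h^{ik}-h^{hk}h^{ij}$ throughout.

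First I would treat the direct implication. Assuming $\mathcal{C}^{n}$ is $S3$-like, the definition (\ref{S3-like}) supplies a positively $0$-homogeneous scalar $S$ with $K^{2}S^{hijk}=S\,P^{hijk}$. Substituting the right-hand side of (\ref{formula((S)si(h))}) for $K^{2}S^{hijk}$ and isolating the term carrying $U^{hijk}$ gives
\[
U^{hijk}=\left[\frac{4S}{(m-1)(m-2)^{2}}-\frac{1}{(m-1)^{2}}\right]P^{hijk},
\]
which is precisely (\ref{S3-like-condition}) with $\lambda=\frac{4S}{(m-1)(m-2)^{2}}-\frac{1}{(m-1)^{2}}$.

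For the converse I would insert the hypothesis $U^{hijk}=\lambda\,P^{hijk}$ into (\ref{formula((S)si(h))}) and collect the two terms proportional to $P^{hijk}$, obtaining $K^{2}S^{hijk}=S\,P^{hijk}$ with $S=\frac{(m-2)^{2}}{4}\left[\frac{1}{m-1}+(m-1)\lambda\right]$; this is exactly the form (\ref{S3-like}) demanded by the definition.

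The one point that genuinely needs attention --- and the closest thing here to an obstacle --- is the homogeneity bookkeeping, since (\ref{S3-like}) requires $S$ to be positively $0$-homogeneous and (\ref{S3-like-condition}) requires the same of $\lambda$. In both directions, however, $S$ and $\lambda$ are related by an affine transformation whose coefficients depend only on $m$ and $n=\dim M$ and are therefore constant in the momenta $p$. Hence $S$ is positively $0$-homogeneous exactly when $\lambda$ is, and the equivalence follows.
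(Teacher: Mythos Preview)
Your argument is correct and follows exactly the route taken in the paper: both you and the paper simply read off the equivalence from the identity (\ref{formula((S)si(h))}) for $K^{2}S^{hijk}$, and the converse direction yields precisely the paper's formula $S=\frac{(m-2)^{2}}{4}\left[(m-1)\lambda+\frac{1}{m-1}\right]$. You are in fact more thorough than the paper, which records only the converse computation and leaves the forward direction and the homogeneity check implicit; one small cosmetic point is that the affine coefficients depend only on $m$, so the reference to $n=\dim M$ is unnecessary.
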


\begin{proof}
Taking into account the formula (\ref{formula((S)si(h))}) and the condition (%
\ref{S3-like-condition}), we find the scalar function (see (\ref{S3-like}))%
\begin{equation}
S=\frac{(m-2)^{2}}{4}\left[ (m-1)\lambda +\frac{1}{m-1}\right] .
\label{relation((S)si(lambda))}
\end{equation}
\end{proof}

\section{The $T-$tensor $T^{hijk}$}

\hspace{5mm} Let $N=(N_{ij})$ be the canonical nonlinear connection of the $%
m $-th root Cartan space with the metric (\ref{m-root-metric}), whose local
coefficients are given by [8]%
\begin{equation*}
N_{ij}=\gamma _{ij}^{0}-\frac{1}{2}\gamma _{h0}^{0}\dot{\partial}^{h}g_{ij},
\end{equation*}%
where%
\begin{equation*}
\begin{array}{l}
\partial _{k}=\dfrac{\partial }{\partial x^{k}},\medskip \\ 
\gamma _{jk}^{i}=\dfrac{g^{ir}}{2}(\partial _{k}g_{rj}+\partial
_{j}g_{rk}-\partial _{r}g_{jk}),\medskip \\ 
\gamma _{ij}^{0}=-\gamma _{ij}^{s}p_{s},\medskip \\ 
\gamma _{h0}^{0}=\gamma _{hr}^{l}g^{rs}p_{l}p_{s}.%
\end{array}%
\end{equation*}

Let 
\begin{equation*}
C\Gamma (N)=(H_{jk}^{i},C_{i}^{jk})
\end{equation*}%
be the Cartan canonical connection of the $m$-th root Cartan space with the
metric (\ref{m-root-metric}). The local components of the Cartan canonical
connection $C\Gamma (N)$ have the expressions [8]%
\begin{equation*}
\begin{array}{l}
H_{jk}^{i}=\dfrac{g^{ir}}{2}(\delta _{k}g_{rj}+\delta _{j}g_{rk}-\delta
_{r}g_{jk}),\medskip \\ 
C_{i}^{jk}=g_{is}C^{sjk}=-\dfrac{g_{is}}{2}\dot{\partial}^{k}g^{js},%
\end{array}%
\end{equation*}%
where%
\begin{equation*}
\delta _{j}=\partial _{j}+N_{js}\dot{\partial}^{s}.
\end{equation*}

In the sequel, let us compute the $T-$tensor $T^{hijk}$ of the $m$-th root
Cartan space, which is defined as [11]%
\begin{equation*}
T^{hijk}\overset{def}{=}%
KC^{hij}|^{k}+l^{h}C^{ijk}+l^{i}C^{jkh}+l^{j}C^{khi}+l^{k}C^{hij},
\end{equation*}%
where " $|^{k}$ " denotes the local $v-$covariant derivation with respect to 
$C\Gamma (N)$, that is we have%
\begin{equation*}
C^{hij}|^{k}=\dot{\partial}%
^{k}C^{hij}+C^{rij}C_{r}^{hk}+C^{hrj}C_{r}^{ik}+C^{hir}C_{r}^{jk}.
\end{equation*}

Using the definition of the local $v-$covariant derivation [8], together
with the relations (\ref{v-derivation(C_i^jk)}) and (\ref{lgh-formulas}), by
direct computations, we find the relations:%
\begin{equation}
\begin{array}{l}
K|^{k}=a^{k}=l^{k},\medskip \\ 
a^{i}|^{k}=\dfrac{(m-1)}{K}(a^{ik}-a^{i}a^{k})=\dfrac{h^{ik}}{K},\medskip \\ 
a^{ij}|^{k}=\dfrac{(m-2)}{K}(a^{ik}a^{j}+a^{jk}a^{i}-2a^{i}a^{j}a^{k})=%
\dfrac{(m-2)}{(m-1)K}(h^{ik}l^{j}+h^{jk}l^{i}).%
\end{array}
\label{v-derivari (auri)}
\end{equation}

Suppose that we have $m\geq 4$. Then, the notation%
\begin{equation*}
a^{hijk}=\left[ a^{hijki_{5}i_{6}...i_{m}}(x)p_{i_{5}}p_{i_{6}}...p_{i_{m}}%
\right] /K^{m-4}
\end{equation*}
is very useful. In this context, we can give the next geometrical results:

\begin{lemma}
\label{Lema} The $v-$covariant derivation of the tensor $a^{hij}$ is given
by the following formula:%
\begin{eqnarray}
a^{hij}|^{k} &=&\frac{(m-3)}{K}a^{hijk}+\frac{m}{2K}a^{hij}a^{k}-\frac{(m-2)%
}{2K}\cdot \left\{ a_{r}^{hk}a^{rij}+\right.  \label{v-derivation(a^hij)} \\
&&+a_{r}^{ik}a^{rhj}+a_{r}^{jk}a^{rhi}-a^{kij}a^{h}-a^{hkj}a^{i}-a^{hik}a^{j}-
\notag \\
&&-a^{ij}a^{hk}-a^{hj}a^{ik}-a^{hi}a^{jk}+2\left( a^{ij}a^{h}a^{k}+\right. 
\notag \\
&&\left. \left. +a^{hj}a^{i}a^{k}+a^{hi}a^{j}a^{k}\right) \right\} .  \notag
\end{eqnarray}
\end{lemma}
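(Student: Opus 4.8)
The plan is to expand the definition of the local $v$-covariant derivation directly and then substitute the explicit formula (\ref{v-derivation(C_i^jk)}) for $C_{r}^{jk}$. By definition,
$$a^{hij}|^{k}=\dot{\partial}^{k}a^{hij}+a^{rij}C_{r}^{hk}+a^{hrj}C_{r}^{ik}+a^{hir}C_{r}^{jk},$$
so the computation splits into the partial-derivative term and three structurally identical connection terms.

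First I would handle $\dot{\partial}^{k}a^{hij}$. Writing $a^{hij}=A^{hij}/K^{m-3}$ with $A^{hij}=a^{hiji_{4}\dots i_{m}}(x)p_{i_{4}}\dots p_{i_{m}}$, the symmetry of the coefficients in all indices gives $\dot{\partial}^{k}A^{hij}=(m-3)K^{m-4}a^{hijk}$, while $\dot{\partial}^{k}K=l^{k}=a^{k}$ by (\ref{lgh-formulas}). The product rule then yields $\dot{\partial}^{k}a^{hij}=\frac{(m-3)}{K}a^{hijk}-\frac{(m-3)}{K}a^{hij}a^{k}$, which already produces the leading $a^{hijk}$ term of the claimed formula.

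Next I would substitute (\ref{v-derivation(C_i^jk)}) into each connection term and reduce the contractions using three identities: $a_{r}a^{rij}=a^{ij}$ (a consequence of $a_{s}a^{sjk}=a^{jk}$), the Kronecker contractions $\delta_{r}^{h}a^{rij}=a^{hij}$, and the symmetry relation $a^{rij}a_{r}^{hk}=a_{r}^{ij}a^{rhk}=a_{r}^{hk}a^{rij}$ coming from the total symmetry of $a^{rij}$ and of $a_{rs}$. After this reduction each of $a^{rij}C_{r}^{hk}$, $a^{hrj}C_{r}^{ik}$, $a^{hir}C_{r}^{jk}$ becomes $-\frac{(m-2)}{2K}$ times a sum of the quadratic $a\cdot a$ terms appearing in the curly bracket of the statement, together with exactly one copy of $-a^{hij}a^{k}$ each.

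The one step requiring care is the bookkeeping of the $a^{hij}a^{k}$ contributions. The three connection terms each contribute $+\frac{(m-2)}{2K}a^{hij}a^{k}$ after the sign coming from $C_{r}^{jk}$, giving $\frac{3(m-2)}{2K}a^{hij}a^{k}$, which must be added to the $-\frac{(m-3)}{K}a^{hij}a^{k}$ from the partial-derivative term. The arithmetic $-\frac{2(m-3)}{2K}+\frac{3(m-2)}{2K}=\frac{m}{2K}$ produces precisely the coefficient in (\ref{v-derivation(a^hij)}), and collecting the remaining quadratic terms reproduces the curly bracket verbatim. The main obstacle is therefore organizational rather than conceptual: correctly pairing the many contracted indices and applying the symmetry identity so that $a^{rij}a_{r}^{hk}$, $a^{hrj}a_{r}^{ik}$, $a^{hir}a_{r}^{jk}$ collapse to the three listed $a_{r}^{\bullet}a^{r\bullet}$ terms without double counting, and keeping the nine remaining cubic and quadratic monomials correctly signed.
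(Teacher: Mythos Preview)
Your proposal is correct and follows exactly the route of the paper: compute $\dot{\partial}^{k}a^{hij}=\frac{m-3}{K}(a^{hijk}-a^{hij}a^{k})$, then insert (\ref{v-derivation(C_i^jk)}) into the three connection terms of the $v$-covariant derivative and collect. The paper's proof merely states these two ingredients without spelling out the bookkeeping you carried through, so your write-up is a faithful (and more detailed) version of the same argument.
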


\begin{proof}
Note that, by a direct computation, we obtain the relation%
\begin{equation}
\frac{\partial a^{hij}}{\partial p_{k}}=\frac{(m-3)}{K}\left(
a^{hijk}-a^{hij}a^{k}\right) .  \label{deriv-of-a^hij-wrt-to-p_k}
\end{equation}

Finally, using the definition of the local $v-$covariant derivation,
together with the formulas (\ref{deriv-of-a^hij-wrt-to-p_k}) and (\ref%
{v-derivation(C_i^jk)}), we find the equality (\ref{v-derivation(a^hij)}).
\end{proof}

\begin{theorem}
The $T-$tensor $T^{hijk}$ of the $m$-th root Cartan space is given by the
expression%
\begin{eqnarray}
T^{hijk} &=&-\frac{(m-1)(m-2)(m-3)}{2K}a^{hijk}+\frac{(m-1)(m-2)^{2}}{4K}%
\cdot  \label{T^hijk(m-root)} \\
&&\cdot \left( a_{r}^{hk}a^{rij}+a_{r}^{ik}a^{rhj}+a_{r}^{jk}a^{rhi}\right) -%
\frac{m(m-1)(m-2)}{4K}\cdot  \notag \\
&&\cdot \left(
a^{hij}a^{k}+a^{hjk}a^{i}+a^{ijk}a^{h}+a^{hik}a^{j}-a^{ij}a^{hk}-a^{hj}a^{ik}-a^{ih}a^{jk}\right) .
\notag
\end{eqnarray}
\end{theorem}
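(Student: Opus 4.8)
The plan is to compute $T^{hijk}$ directly from its definition, exploiting the fact that the local $v$-covariant derivation ``$|^{k}$'' is a tensor derivation: it obeys the Leibniz rule and commutes with contractions. Accordingly, I would first abbreviate the $v$-torsion (\ref{v-torsion(C^ijk)}) as
\begin{equation*}
C^{hij}=-\frac{(m-1)(m-2)}{2K}B^{hij},\qquad B^{hij}:=a^{hij}-a^{hi}a^{j}-a^{ij}a^{h}-a^{jh}a^{i}+2a^{h}a^{i}a^{j},
\end{equation*}
so that differentiating $C^{hij}$ reduces to differentiating the scalar $1/K$ and the tensor $B^{hij}$. Since $K|^{k}=a^{k}$ gives $(1/K)|^{k}=-a^{k}/K^{2}$, the Leibniz rule yields
\begin{equation*}
KC^{hij}|^{k}=\frac{(m-1)(m-2)}{2}\left[\frac{a^{k}}{K}B^{hij}-B^{hij}|^{k}\right].
\end{equation*}

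Next I would evaluate $B^{hij}|^{k}$ term by term. The derivative of the leading piece, $a^{hij}|^{k}$, is furnished by Lemma \ref{Lema}, which is precisely what introduces the genuinely new degree-four tensor $a^{hijk}$ together with the quadratic blocks $a_{r}^{hk}a^{rij}$ and its index companions. The remaining products $a^{hi}a^{j}$, $a^{ij}a^{h}$, $a^{jh}a^{i}$ and $a^{h}a^{i}a^{j}$ are expanded by the Leibniz rule, feeding in the closed formulas (\ref{v-derivari (auri)}) for $a^{i}|^{k}$ and $a^{ij}|^{k}$. Multiplying through by $K$ clears all denominators down to a single overall factor $1/K$, matching the homogeneity of the target expression.

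Finally I would add the four homogeneity-correcting terms $l^{h}C^{ijk}+l^{i}C^{jkh}+l^{j}C^{khi}+l^{k}C^{hij}$, writing $l^{i}=a^{i}$ (see (\ref{lgh-formulas})) and inserting the explicit $C^{\,\cdot\cdot\cdot}$ from (\ref{v-torsion(C^ijk)}). After collecting, the contributions fall into a small number of index-symmetric families: the single new tensor $a^{hijk}$; the quadratic blocks $a_{r}^{\,\cdot k}a^{r\cdot\cdot}$; the ``triple-times-single'' products of type $a^{hij}a^{k}$; the ``double-times-double'' products of type $a^{ij}a^{hk}$; and the pure quartic products $a^{h}a^{i}a^{j}a^{k}$.

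The hard part will be the purely combinatorial bookkeeping of this reduction. The essential point to verify is that every pure quartic contribution $a^{h}a^{i}a^{j}a^{k}$ cancels identically: such terms are produced by the $a^{k}B^{hij}/K$ term, by the $+2a^{h}a^{i}a^{j}$ piece of $B^{hij}$, by the Leibniz derivatives of the products, and by the four $l\cdot C$ terms, so their coefficients must sum to zero. Once this cancellation is confirmed, one checks that the surviving coefficients collapse to exactly $-\frac{(m-1)(m-2)(m-3)}{2}$ for $a^{hijk}$, $+\frac{(m-1)(m-2)^{2}}{4}$ for the quadratic blocks, and $\mp\frac{m(m-1)(m-2)}{4}$ for the single/double products, with the index groupings fully symmetrized as displayed in (\ref{T^hijk(m-root)}). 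The total symmetry of $T^{hijk}$ in $h,i,j,k$ serves as a running consistency check on the algebra throughout the reduction.
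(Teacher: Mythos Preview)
Your proposal is correct and follows essentially the same route as the paper: use Lemma~\ref{Lema} for $a^{hij}|^{k}$, the formulas (\ref{v-derivari (auri)}) for $a^{i}|^{k}$ and $a^{ij}|^{k}$, insert the explicit $C^{ijk}$ from (\ref{v-torsion(C^ijk)}), and then do the combinatorial reduction. The only organizational difference is that the paper first absorbs the term $l^{k}C^{hij}$ by writing $T^{hijk}=(KC^{hij})|^{k}+a^{h}C^{ijk}+a^{i}C^{jkh}+a^{j}C^{khi}$ and differentiating $KC^{hij}=-\tfrac{(m-1)(m-2)}{2}B^{hij}$ directly (so no $1/K$ ever has to be differentiated), whereas you differentiate $1/K$ and then add all four $l\cdot C$ terms; the two computations are trivially equivalent.
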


\begin{proof}
It is obvious that we have the equality%
\begin{eqnarray*}
T^{hijk} &=&(KC^{hij})|^{k}+l^{h}C^{ijk}+l^{i}C^{jkh}+l^{j}C^{khi}= \\
&=&(KC^{hij})|^{k}+a^{h}C^{ijk}+a^{i}C^{jkh}+a^{j}C^{khi}.
\end{eqnarray*}

Consequently, differentiating $v-$covariantly the relation (\ref%
{v-torsion(C^ijk)}) multiplied by $K$ and using the formulas (\ref%
{v-derivari (auri)}), together with the Lemma \ref{Lema}, by laborious
computations, it follows the required result.
\end{proof}

\section{The particular case of Berwald-Mo\'{o}r metric of momenta}

\hspace{5mm} Let us consider now the particular case when $m=n\geq 4$ and%
\begin{equation*}
a^{i_{1}i_{2}...i_{n}}(x)=\left\{ 
\begin{array}{ll}
1/n!, & i_{1}\neq i_{2}\neq ...\neq i_{n}\medskip \\ 
0, & \text{otherwise.}%
\end{array}%
\right.
\end{equation*}%
In this special case, the $m$-th root metric (\ref{m-root-metric}) becomes
the \textbf{Berwald-Mo\'{o}r metric of momenta} [8]%
\begin{equation}
K(p)=\sqrt[n]{p_{1}p_{2}...p_{n}}.  \label{Berwald-moor metric}
\end{equation}

By direct computations, we deduce that the $n$-dimensional locally
Minkowski-Cartan space $\mathcal{C}^{n}=\left( M^{n},K(p)\right) $ endowed
with the Berwald-Mo\'{o}r metric of momenta (\ref{Berwald-moor metric}) is
characterized by the following geometrical entities and relations denoted by
(E-R):%
\begin{equation*}
\begin{array}{ccc}
a^{i}=\dfrac{K}{n}\cdot \dfrac{1}{p_{i}}, & a_{i}=\dfrac{p_{i}}{K}, & 
a_{i}\cdot a^{i}=\dfrac{1}{n}\text{ (no sum by }i\text{),}%
\end{array}%
\end{equation*}%
\begin{equation*}
a^{ij}=\left\{ 
\begin{array}{ll}
\dfrac{n}{n-1}\cdot a^{i}a^{j}, & i\neq j\medskip \\ 
0, & i=j%
\end{array}%
\right. ,
\end{equation*}%
\begin{equation*}
a_{ij}=\left\{ 
\begin{array}{ll}
n\cdot a_{i}a_{j}, & i\neq j\medskip \\ 
-n(n-2)\cdot (a_{i})^{2}, & i=j,%
\end{array}%
\right.
\end{equation*}%
\begin{equation*}
a^{ijk}=\left\{ 
\begin{array}{ll}
\dfrac{n^{2}}{(n-1)(n-2)}\cdot a^{i}a^{j}a^{k}, & i\neq j\neq k\medskip \\ 
0, & \text{otherwise}%
\end{array}%
\right.
\end{equation*}%
and%
\begin{equation*}
a^{hijk}=\left\{ 
\begin{array}{ll}
\dfrac{n^{3}}{(n-1)(n-2)(n-3)}\cdot a^{h}a^{i}a^{j}a^{k}, & h\neq i\neq
j\neq k\medskip \\ 
0, & \text{otherwise.}%
\end{array}%
\right.
\end{equation*}%
Moreover, the equalities (E-R) imply that the components $a_{i}^{jk}$ are
given by the formulas:%
\begin{equation}
\begin{array}{ll}
a_{i}^{jk}=-\dfrac{n^{2}}{(n-1)(n-2)}\cdot a_{i}a^{j}a^{k}, & i\neq j\neq
k\medskip \\ 
a_{i}^{ik}=a_{i}^{ki}=\dfrac{n}{n-1}\cdot a^{k}, & i\neq k\text{ (no sum by }%
i\text{)}\medskip \\ 
a_{i}^{kk}=0, & \forall \text{ }i=\overline{1,n},\text{ (no sum by }k\text{).%
}%
\end{array}
\label{formulas(a_i^jk)}
\end{equation}

In this context, we obtain the following important geometrical result:

\begin{theorem}
The locally Minkowski-Cartan space $\mathcal{C}^{n}=\left( M^{n},K(p)\right) 
$, $n\geq 4$, endowed with the Berwald-Mo\'{o}r metric of momenta (\ref%
{Berwald-moor metric}) is characterized by the following geometrical
properties:

\begin{enumerate}
\item The torsion covector $C^{i}$ vanish;

\item $S3$-likeness with the scalar function $S=-1$;

\item The $T-$tensor $T^{hijk}$ vanish.
\end{enumerate}
\end{theorem}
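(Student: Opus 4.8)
The plan is to verify the three claimed properties by specializing the general formulas already established in the paper to the Berwald-Moór data, using the explicit entries collected in (E-R) and the derived formulas (\ref{formulas(a_i^jk)}) for $a_i^{jk}$. Since the metric is $m=n$ with the totally antisymmetric-support tensor $a^{i_1\dots i_n}$, the key structural feature is that every contracted tensor $a^i$, $a^{ij}$, $a^{ijk}$, $a^{hijk}$ either vanishes (whenever two indices coincide) or factors as a product of the $a^i$'s times an explicit constant. This ``no-coincident-index'' rigidity is what makes all three computations tractable.

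\smallskip

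For property (1), I would substitute the third line of (\ref{formulas(a_i^jk)}) into the torsion covector formula from the Proposition, $C^i=-\frac{(m-2)}{2K}(a_r^{ir}-na^i)$. The contraction $a_r^{ir}=\sum_r a_r^{ir}$ splits into the diagonal term $r=i$ (which is $a_i^{ii}=0$ by the $a_i^{kk}=0$ rule) and the $n-1$ off-diagonal terms $r\neq i$, each equal to $\frac{n}{n-1}a^i$ by the second line of (\ref{formulas(a_i^jk)}). Thus $a_r^{ir}=(n-1)\cdot\frac{n}{n-1}a^i=n\,a^i$, so $a_r^{ir}-na^i=0$ and $C^i=0$.

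\smallskip

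For property (2), by Theorem on $S3$-likeness I must show $U^{hijk}=\lambda\{h^{hj}h^{ik}-h^{hk}h^{ij}\}$ for a $0$-homogeneous $\lambda$, and then read off $S$ from (\ref{relation((S)si(lambda))}). Here $U^{hijk}=a_r^{ij}a^{rhk}-a_r^{ik}a^{rhj}$, so I would insert the explicit $a_r^{ij}$ and $a^{rhk}$ and also compute $h^{ij}=(m-1)(a^{ij}-a^ia^j)$ from (\ref{lgh-formulas}) on the Berwald-Moór data. The expected outcome is that $U^{hijk}$ comes out proportional to the curvature-type combination with a constant $\lambda$; forcing $S=-1$ via (\ref{relation((S)si(lambda))}) fixes $\lambda=-\frac{1}{(m-1)}\bigl[\frac{4}{(m-2)^2}+\frac{1}{m-1}\bigr]$ with $m=n$. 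The main obstacle I anticipate is precisely this step: one must carefully track the index-coincidence cases in the products $a_r^{ij}a^{rhk}$, since the nonzero contributions only occur for specific coincidence patterns among $r,i,j,h,k$, and confirm that the resulting tensor is exactly of the form $h^{hj}h^{ik}-h^{hk}h^{ij}$ rather than merely some antisymmetric object. This bookkeeping of which index patterns survive is the genuinely delicate part.

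\smallskip

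For property (3), the cleanest route is to avoid re-deriving $T^{hijk}$ from scratch and instead substitute the Berwald-Moór data directly into the closed form (\ref{T^hijk(m-root)}) from the preceding Theorem. With $m=n$, all of $a^{hijk}$, $a_r^{hk}a^{rij}$, and the products $a^{hij}a^k$, $a^{ij}a^{hk}$ reduce to constant multiples of $a^ha^ia^ja^k$ on their respective no-coincidence supports. I would collect the three groups of terms in (\ref{T^hijk(m-root)})---the single $a^{hijk}$ term, the three $a_r^{\cdot k}a^{r\cdot\cdot}$ terms, and the seven $a^{\cdot\cdot\cdot}a^{\cdot}$ / $a^{\cdot\cdot}a^{\cdot\cdot}$ terms---evaluate each constant coefficient on the common factor $a^ha^ia^ja^k$ (taking care, once more, of the index-coincidence cases that make individual summands vanish), and check that the constants cancel to zero. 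Since the space is locally Minkowski, vanishing of $T^{hijk}$ is the expected and natural conclusion, and the computation serves as a consistency check on formula (\ref{T^hijk(m-root)}).
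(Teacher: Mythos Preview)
Your plan is correct and follows essentially the same route as the paper: in part~1 the paper reaches $a_r^{ir}=na^i$ by the short contraction chain $a_{rs}a^{sir}=n\,a_r a_s a^{sir}=n\,a_r a^{ir}=na^i$ rather than via the tabulated values (\ref{formulas(a_i^jk)}), but the substance is identical; in part~2 the paper does exactly the $U^{hijk}$ computation you describe and finds $\lambda=-\dfrac{n^{2}}{(n-1)^{2}(n-2)^{2}}$, which plugged into (\ref{relation((S)si(lambda))}) gives $S=-1$; and in part~3 the paper likewise substitutes (E-R) and (\ref{formulas(a_i^jk)}) into (\ref{T^hijk(m-root)}) and records the cancellation.
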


\begin{proof}
1. It is easy to see that we have%
\begin{equation*}
\sum_{r}a_{r}^{ir}=\sum_{r,s}a_{rs}a^{sir}=n\sum_{r,s}a_{r}a_{s}a^{sir}=n%
\sum_{r}a_{r}a^{ir}=na^{i}.
\end{equation*}

2. It is obvious that we have 
\begin{equation*}
h^{ij}=\left\{ 
\begin{array}{ll}
a^{i}a^{j}, & i\neq j\medskip \\ 
-(n-1)\cdot (a_{i})^{2}, & i=j.%
\end{array}%
\right.
\end{equation*}%
Consequently, by computations, we obtain%
\begin{equation*}
U^{hijk}=-\frac{n^{2}}{(n-1)^{2}(n-2)^{2}}\left\{
h^{hj}h^{ik}-h^{hk}h^{ij}\right\} ,
\end{equation*}%
where $U^{hijk}$ is given by the relation (\ref{tensor(U^hijk)}). It follows
what we were looking for (see the equalities (\ref{S3-like-condition}) and (%
\ref{relation((S)si(lambda))})).

3. Using the relation (\ref{T^hijk(m-root)}) and the formulas (E-R) and (\ref%
{formulas(a_i^jk)}), by laborious computations, we deduce that $T^{hijk}=0$.
\end{proof}

\textbf{Acknowledgements. }Research supported by Romanian Academy Grant No.
5 / 5.02.2008.

\begin{center}
Gheorghe ATANASIU and Mircea NEAGU

University "Transilvania" of Bra\c{s}ov

Faculty of Mathematics and Informatics

Department of Algebra, Geometry and Differential Equations

Bd. Eroilor, Nr. 29, Bra\c{s}ov, BV 500019, Romania.

gh\_atanasiu@yahoo.com

mirceaneagu73@yahoo.com
\end{center}

\end{document}